\documentclass[abstracton]{scrartcl}

\usepackage[utf8]{inputenc}
\usepackage[T1]{fontenc}
\usepackage[english]{babel}

\usepackage{paralist}
\usepackage{graphicx}
\usepackage{overpic}
\usepackage{subfigure}

\usepackage{booktabs}
\usepackage{dcolumn}

\usepackage{cite}

\usepackage{amsmath}
\usepackage{amssymb}
\usepackage{amsthm}

\usepackage{hyperref}

\theoremstyle{definition}
\newtheorem{defn}{Definition}

\theoremstyle{plain}
\newtheorem{thm}[defn]{Theorem}
\newtheorem{prop}[defn]{Proposition}

\newtheorem{obs}[defn]{Observation}

\title{On a new conformal functional for simplicial surfaces\thanks{This research was supported by the DFG Collaborative Research Center TRR 109, ``Discretization in Geometry and Dynamics''.}}

\author{Alexander I. Bobenko\footnote{Institut f\"ur Mathematik, Technische Universit\"at Berlin, Stra\ss e des 17. Juni 136, 10623 Berlin, Germany} \and Martin P. Weidner\footnotemark[2]}
\date{}

\begin{document}
	\maketitle
	\begin{abstract}\noindent We introduce a smooth quadratic conformal functional and its weighted version $$W_2=\sum_e \beta^2(e)\quad W_{2,w}=\sum_e (n_i+n_j)\beta^2(e),$$ where $\beta(e)$ is the extrinsic intersection angle of the circumcircles of the triangles of the mesh sharing the edge $e=(ij)$ and $n_i$ is the valence of vertex $i$. Besides minimizing the squared local conformal discrete Willmore energy $W$ this functional also minimizes local differences of the angles $\beta$. We investigate the minimizers of this functionals for simplicial spheres and simplicial surfaces of nontrivial topology. Several remarkable facts are observed. In particular for most of randomly generated simplicial polyhedra the minimizers of $W_2$ and $W_{2,w}$ are inscribed polyhedra. We demonstrate also some applications in geometry processing, for example, a conformal deformation of surfaces to the round sphere. A partial theoretical explanation through quadratic optimization theory of some observed phenomena is presented.    
	\\\\
	\noindent\textbf{2010 Mathematics Subject Classification:} 52C26, 53A30, 53C42 
	\end{abstract}
	\section{Introduction. Discrete conformal Willmore functional}	
The Willmore energy of a surface $S\subset {\mathbb R}^3$ is given as 
$$
\int_S (H^2-K)=1/4\int_S (k_1-k_2)^2,
$$
where $k_1$ and $k_2$ denote the principal curvatures, $H=1/2(k_1+k_2)$ and $K=k_1k_2$ the mean and the Gaussian curvatures respectively. For compact surfaces with fixed boundary a minimizer of the Willmore energy is also a minimizer of total curvature $\int_S(k_1^2+k_2^2)$, which is a standard functional in variationally optimal surface modelling. 

In the last years various discretizations of the Willmore functional and of the corresponding flow were investigated. They are mostly used for surface fairing. 
For surface restoration with smooth boundary condition based on a discrete version of the Willmore energy see \cite{Clarenz}. More recently quadratic curvature energy flows were discretized in \cite{Wardetzky} using a semi-implicit scheme. A two step discretization of the Willmore flow was suggested in \cite{Olischlaeger}.

An important feature of the Willmore energy is its conformal invariance, i.e. invariance under M\"obius transformations. 	A conformally invariant discrete analogue of the Willmore functional for simplicial surfaces was introduced in \cite{WillmoreAlt} and studied in \cite{WillmoreFlow}. Recently there was a big progress in development of conformal geometry processing in general \cite {Crane} and in particular in investigation of discrete conformal curvature flows \cite{CranePS}.  
	
The discrete conformal Willmore energy introduced in \cite{WillmoreAlt} is defined in terms of the intersection angles of the circumcircles of neighboring triangles. 
	\begin{defn}
		\label{def:standard}
		Let $S$ be a simplicial surface in 3-dimensional Euclidean space. Denote by $\mathcal{E}$ and $\mathcal{V}$ its edge set and its vertex set respectively. Let $\beta(e_{ij})$ be the external intersection angle of the circumcircles of the two triangles incident with the edge $e_{ij}\in\mathcal{E}$ as shown in Figure \ref{fig:defWillmore}. Then the \emph{discrete conformal Willmore functional} $W(S)$ of $S$ is defined as 
		\begin{equation}
			\label{eq:defClassic}
			W(S):=\sum_{e_{ij}\in\mathcal{E}}\beta(e_{ij})-\pi|\mathcal{V}|,
		\end{equation}
		where $|\mathcal{V}|$ is the number of vertices.
	\end{defn}		
	
	We call the realization of a polyedron \emph{inscribed}, if all its vertices lie on a round sphere. Note that in general we do not require such a realization to be convex. On the other hand we call a polyhedron \emph{inscribable} or \emph{of inscribable type} if there exists a convex, non-degenerate (i.e. without coinciding vertices) inscribed realization. Recall that for inscribed simplicial polyhedra convexity is equivalent to the Delaunay property of the triangulation.  The functional $W$ has two important properties that justify its name.
	
	\begin{figure}
		\hspace*{\fill}
		\begin{overpic}[tics=5,scale=1.5]{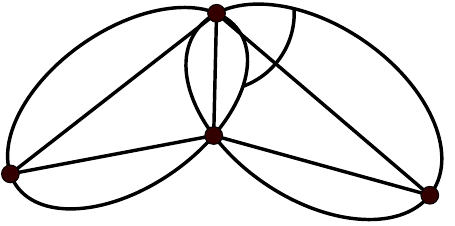}
			\put(65,35){$\beta(e_{ij})$}
			\put(48,35){$e_{ij}$}
			\put(47,50){$v_i$}
			\put(45,12){$v_j$}
		\end{overpic}
		\hspace*{\fill}
		\caption{Definition of the external intersection angle $\beta(e_{ij})$.}
		\label{fig:defWillmore}
	\end{figure}
	
	\begin{thm}
		\label{thm:willmore}
		Let $S$ be a simplicial closed surface. Then the following properties hold for the functional $W(S)$.
		\begin{enumerate}[(i)]
			\item $W(S)$ is invariant under conformal transformations of the 3-dimensional Euclidean space (M\"obius transformations).
			\item $W(S)$ is non-negative and it is equal to zero if and only if $S$ is a convex inscribed polyhedron.
		\end{enumerate}
	\end{thm}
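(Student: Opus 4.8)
The plan is to prove the two properties separately, handling conformal invariance first and then the non-negativity/equality characterization.

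For part (i), the key observation is that the external intersection angle $\beta(e_{ij})$ between two circles is a quantity determined entirely by the circles' mutual configuration, and Möbius transformations map circles to circles (or lines) while preserving intersection angles. First I would recall that each triangle of $S$ has a well-defined circumcircle, and that a Möbius transformation of ${\mathbb R}^3 \cup \{\infty\}$ sends these circumcircles to the circumcircles of the image triangles. Since the sum $\sum_{e_{ij}} \beta(e_{ij})$ is built only from angles between pairs of circumcircles sharing an edge, and the term $\pi|\mathcal{V}|$ is a purely combinatorial constant unchanged by the transformation, the whole functional $W(S)$ is invariant. The one subtlety I would address carefully is conformal invariance of the angle itself: I would verify that $\beta$ is an \emph{unsigned} angle between circles in the common plane they span (or more robustly, the angle between the circles viewed as curves at an intersection point), so that the standard fact ``Möbius maps are conformal'' applies directly.

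For part (ii), I would work edge by edge. The natural strategy is to show that each local contribution is bounded below in a way that sums to the stated bound, then characterize equality. Concretely, I expect there is a local inequality at each vertex $v_i$ of the form $\sum_{e_{ij} \ni v_i} \theta_{ij} \ge 2\pi$ relating the angles $\beta$ to the combinatorics around the vertex, obtained by projecting the incident circumcircles (via a Möbius transformation sending $v_i$ to $\infty$) so that the star of $v_i$ becomes a planar configuration; stereographic projection from $v_i$ turns the circumcircles through $v_i$ into straight lines, and the external angles $\beta$ become the ordinary angles of a planar polygon. Summing these vertex inequalities and using that each edge is counted appropriately should yield $\sum_{e_{ij}} \beta(e_{ij}) \ge \pi|\mathcal{V}|$, i.e. $W(S) \ge 0$. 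The equality case is then forced precisely when every projected polygon degenerates to a convex configuration with all vertices concyclic, which pulls back to the inscribed convex condition.

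The hard part will be the equality characterization in (ii), and in particular the ``only if'' direction: showing that $W(S)=0$ \emph{forces} $S$ to be convex and inscribed, rather than merely satisfying the local equalities at each vertex independently. The local analysis after projecting a single vertex to infinity gives a condition on one star, but I must argue that these local inscribed/convexity conditions, holding simultaneously at every vertex, can only be realized by a single global round sphere. I would approach this by a propagation argument: once the circumcircles around one vertex lie on a common sphere (the image of a plane under the inverse projection), adjacent vertices share circumcircles and hence share that sphere, and connectedness of $S$ propagates the sphere globally. I would also need to confirm that the local equality condition genuinely corresponds to convexity (the Delaunay property noted in the text), invoking the stated equivalence between convexity and the Delaunay property for inscribed simplicial polyhedra. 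Conformal invariance from part (i) is a useful tool here, since it lets me normalize by sending any one vertex to $\infty$ without loss of generality.
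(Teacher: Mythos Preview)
Your treatment of part~(i) matches the paper exactly: M\"obius transformations preserve circles and their intersection angles, and $\pi|\mathcal{V}|$ is combinatorial, so invariance is immediate.

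For part~(ii) the paper does not give its own argument but defers to \cite{WillmoreNeu}; the strategy there is indeed the one you outline---establish the vertex inequality $\sum_{e\ni v}\beta(e)\ge 2\pi$ (this inequality is even quoted later in the present paper), sum over vertices, and analyze equality. However, your outline contains a genuine gap. After sending $v_i$ to $\infty$ by a M\"obius map, the circumcircles through $v_i$ do become straight lines, but the link of $v_i$ becomes a closed polygon in $\mathbb{R}^3$, \emph{not} a planar polygon. The images of the neighbors $v_{j_1},\dots,v_{j_n}$ need not lie in a common plane, and the angles $\beta(e_{ij_k})$ are the exterior angles of this space polygon at its vertices. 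The inequality $\sum_k \beta(e_{ij_k})\ge 2\pi$ is therefore not the elementary planar fact you invoke; it is Fenchel's theorem on the total curvature of closed curves (in its polygonal form). This is precisely the ingredient used in \cite{WillmoreNeu}.

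This correction also sharpens your equality analysis. Fenchel's theorem gives equality if and only if the polygon is planar \emph{and} convex. Thus $W(S)=0$ forces, for every vertex $v_i$, the image of the link to be a planar convex polygon; pulling back, the link lies on a sphere through $v_i$ and is Delaunay there. Your propagation idea (adjacent stars share circumcircles, hence share the sphere) is then the right way to globalize, and the convexity/Delaunay equivalence you cite finishes the characterization. So the architecture of your argument is sound, but you must replace ``ordinary angles of a planar polygon'' with an appeal to Fenchel's theorem, and correspondingly use its equality case rather than the trivial planar one.
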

	
	The first property follows immediately from the definition since M\"obius transformations preserve circles and their intersection angles. Conformal invariance is an important property of the classical Willmore energy \cite{blaschke,willmoreBook}. The second property is the discrete analogue of the fact that the classic Willmore functional is non-negative and that it is equal to zero if and only if the surface at hand is a (round) sphere. For a proof of (ii) see \cite{WillmoreNeu}. Let us note that the minimizer of $W$ for combinatorial spheres is not unique: $W$ vanishes for any inscribed convex polyhedron, i.e. for any Delaunay triangulation of the round sphere.
	
	The functional $W$ can be used in geometry processing to make the surface ``as round as possible''. In \cite{WillmoreFlow} the associated gradient flow is discussed. It works nicely for smoothing surfaces in many cases. However the functional is not smooth for surfaces that have some of the angles $\beta(e_{ij})$ equal to zero. This happens when the circumcircles of two neighboring triangles coincide. To minimze $W$ numerically it works out quite well to simply set the gradient equal to zero as soon as the angle of the corresponding edge attains a value below a certain threshold \cite{WillmoreFlow}. 
	
	In this paper we introduce a smooth conformal energy for simplicial surfaces, which behaves similar to the discrete Willmore energy \eqref{eq:defClassic}. We have observed several surprising features of the minimizers of this functional. Only very few of them we can explain. The other remain to be challenging problems for future research.	
	
	\section{Quadratic circle-angles functional}
	\label{sec:sol}
	A very natural manner to smoothen $W$ is to consider a quadratic modification of \eqref{eq:defClassic}. 
	
	\begin{defn}
		\label{def:squared}
		Let $\beta(e_{ij})$ be the external intersection angle of the circumcircles as in Definition \ref{def:standard}. Then the \emph{quadratic circle-angles (QCA) functional} $W_2(S)$ is given by
		\begin{align}
			W_2(S)&:=\sum_{e_{ij}\in\mathcal{E}}\beta(e_{ij})^2-c.
		\end{align}
	\end{defn}
	
	The normalization constant $c=4\pi^2\mathbf{1}^t(MM^t)^{-1}\mathbf{1}$ depends only on the combinatorial properties of $S$. Here $M$ is the incidence matrix $M\in\mathbb{R}^{|\mathcal{V}|\times|\mathcal{E}|}$ of the edge graph of the surface and $\mathbf{1}$ is the vector $(1,\ldots,1)^t\in\mathbb{R}^{|\mathcal{V}|}$. This choice of $c$ will be justified in section \ref{sec:theo}. Observe that $W_2$ is smooth at $\beta=0$.
	
	A priori it is not clear for which realization (of a given combinatorics) $W_2$ is minimal. Here an interesting case is the one of inscribable polyhedra because there we can directly compare the result with the minimal realization under the discrete conformal Willmore functional $W$.
		
	Besides $W_2$ we have considered some other modifications among which the most promising is a weighted version of $W_2$.
	\begin{defn}	
		\label{def:weighted}
		Denote by $n_i$ the valence of the vertex $v_i\in\mathcal{V}$. Then the \emph{weighted QCA functional} is given by
		\begin{align*}	
			W_{2,w}(S):=&\sum_{v_i\in\mathcal{V}}\left(\left(\sum_{v_j\sim v_i}\beta(e_{ij})\right)^2+\frac12\sum_{v_k\sim v_i}\sum_{v_j\sim v_i}(\beta(e_{ij})-\beta(e_{ik}))^2\right)-c_w\\
			=&\sum_{e_{ij}\in\mathcal{E}}(n_i+n_j)\beta(e_{ij})^2-c_w.
		\end{align*}
	\end{defn}
	
	The constant $c_w=4\pi^2\mathbf{1}^t(MN^{-1}M^t)^{-1}\mathbf{1}$ again only depends on the combinatorial structure of $S$. Here $M$ is the incidence matrix and $N\in\mathbb{R}^{|\mathcal{E}|\times|\mathcal{E}|}$ is the diagonal matrix with the value $n_i+n_j$ in the row (and column) corresponding to the edge $e_{ij}$. Again the choice of $c_w$ will be motivated in section \ref{sec:theo}. The motivation for the essential part of the functional is the following. For every vertex of the surface, compute the local discrete Willmore functional, square it and add the squares of all angle differences that occur at the given vertex. Hence besides minimizing the squared local discrete Willmore functional, the functional also minimizes local angle differences. A nice feature is that the functional allows a simple formulation using the valences of the vertices. This also shows that $W_{2,w}$ is nothing but a weighted version of $W_2$. In fact $W_2$ and $W_{2,w}$ behave in a similar way, as we shall see in the next section.
	
	\section{Minimization of the QCA functional for various types of discrete surfaces}
	\label{sec:exp}
			All examples have been computed within the VaryLab environment available at \url{http://www.varylab.com} using the limited-memory variable metric (LMVM) method from the TAO project. It only requires the implementation of a gradient, which it uses to compute approximations to the Hessian based on previous iterations. See \cite{Tao} for details. All examples from this article are available as *.obj-files at \url{http://page.math.tu-berlin.de/\~bobenko}. In this section  we only describe the observations made during numerical experiments and the statements are not rigorous. A theoretical analysis is given in the next section.
	\subsection{Inscribable Simplicial Polyhedra}
	Consider a polyhedron of inscribable type. By Theorem \ref{thm:willmore} minimizing $W$ yields a convex inscribed realization. An amazing fact about the minimizers of $W_2$ and $W_{2,w}$ is the following
			
	\begin{obs}
		\label{obs:main}
		For many randomly generated simplicial polyhedra, the minimizers of $W_2$ and $W_{2,w}$ are inscribed polyhedra which are convex in many cases. Moreover these minimizers seem to be unique.
	\end{obs}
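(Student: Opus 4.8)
```latex
\bigskip
\noindent\textbf{Proof proposal (plan).}
The statement to be addressed is Observation~\ref{obs:main}, which is explicitly
labelled as non-rigorous and empirical. Since it concerns \emph{many randomly
generated} polyhedra rather than all of them, I would not attempt to prove a
universally quantified theorem; instead the plan is to isolate the robust,
provable core of the observation and to organize the numerical evidence around
it. The provable core is the following reformulation: for the linearized
problem, the constraints inherent in the angles $\beta(e_{ij})$ force the
minimizer of $W_2$ (respectively $W_{2,w}$) toward the inscribed locus. Concretely,
I would first establish a discrete Gauss--Bonnet-type constraint on the angle
vector $\beta=(\beta(e_{ij}))_{e_{ij}\in\mathcal E}$. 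Summing the intersection
angles around each vertex yields a linear relation of the form $M\beta = 2\pi\mathbf 1
- (\text{angle defect terms})$, which for an inscribed polyhedron collapses to the
clean identity $M\beta = 2\pi\mathbf 1$. This is exactly the constraint that
motivates the normalization constants $c$ and $c_w$, and it is the reason those
constants take the stated quadratic forms.

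\smallskip
\noindent The second step is to treat $W_2(S)+c = \|\beta\|^2$ as a quadratic
objective to be minimized subject to the affine constraint $M\beta = 2\pi\mathbf 1$.
By elementary Lagrange-multiplier / least-norm theory, the unique minimizer of
$\|\beta\|^2$ over the affine subspace $\{\beta : M\beta = 2\pi\mathbf 1\}$ is
$\beta^\ast = 2\pi M^t(MM^t)^{-1}\mathbf 1$, with minimal value
$\|\beta^\ast\|^2 = 4\pi^2\,\mathbf 1^t(MM^t)^{-1}\mathbf 1 = c$. Thus
$W_2(S)\ge 0$ on the constraint locus, with equality precisely at $\beta^\ast$,
and the analogous computation with the weight matrix $N$ gives the $W_{2,w}$
statement with constant $c_w=4\pi^2\mathbf 1^t(MN^{-1}M^t)^{-1}\mathbf 1$. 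This
accounts for the precise values of the normalization constants announced in
Definitions~\ref{def:squared} and \ref{def:weighted}, and it explains
\emph{why} the inscribed configurations are natural candidates for minimizers:
among all angle vectors compatible with the vertex-summation constraint, the
one realized by an inscribed polyhedron is the least-norm solution.

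\smallskip
\noindent The hard part---and the reason the statement is phrased as an
observation rather than a theorem---is closing the gap between this
\emph{linear-constraint} model and the \emph{genuine} minimization over
realizations in $\mathbb R^3$. The difficulty is twofold. First, the map from
vertex coordinates to the angle vector $\beta$ is highly nonlinear, and its
image is a curved subset of the constraint hyperplane rather than the whole
affine subspace; one must argue that the least-norm point $\beta^\ast$ is
actually attained by some realization, which is where inscribability of the
combinatorial type enters and where Rivin-type characterizations of inscribable
polyhedra would be the right tool. Second, even granting attainability, local
minimality does not immediately yield global minimality or uniqueness, since the
composed functional $W_2\circ(\text{coordinates})$ is not convex on the
realization space. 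My plan would therefore be to prove the clean constrained-
optimization statement rigorously (this is the content I would promote to a
Proposition in Section~\ref{sec:theo}), and to present the coincidence of its
minimizer with an inscribed realization as the phenomenon that the numerics
confirm but that a full proof would require resolving the attainability and
global-uniqueness obstacles above. I expect the attainability question to be
the principal obstacle, as it couples the combinatorial rigidity of the
triangulation with the transcendental nature of the circle-angle map.
```
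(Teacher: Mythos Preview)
Your plan is essentially the paper's own approach: the observation is not proved, and the partial theoretical justification the paper offers in Section~\ref{sec:theo} is precisely the constrained least-norm computation you outline, yielding $\beta^\ast=2\pi M^t(MM^t)^{-1}\mathbf 1$ and the constants $c,c_w$; Rivin's theorem (Theorem~\ref{thm:rivin}) is then invoked exactly as you anticipate to handle attainability.

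Two points where your sketch diverges from or is weaker than the paper's argument. First, your constraint ``$M\beta=2\pi\mathbf 1-(\text{angle defect terms})$'' has the sign the wrong way round: the key fact (cited from \cite{WillmoreNeu}) is that $M\beta\ge 2\pi\mathbf 1$ for \emph{every} realization, inscribed or not. This inequality is what makes the argument work globally, because it embeds all realizable angle vectors in the feasible region of the \emph{inequality}-constrained program~\eqref{eq:prog2}. The paper then compares \eqref{eq:prog1} and \eqref{eq:prog2} and shows (Proposition~\ref{prop:quadProg}, via Farkas' lemma) that their minima coincide exactly when $\lambda=2\pi(MM^t)^{-1}\mathbf 1\ge 0$. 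You do not isolate this step, and without it your constrained minimum on the affine slice $M\beta=2\pi\mathbf 1$ says nothing about realizations off that slice. Second, attainability is not the ``principal obstacle'' in the sense you suggest: Rivin's theorem gives sharp, checkable conditions for $\beta^\ast$ to be the angle vector of a convex inscribed realization, and the paper packages these together with $\lambda>0$ into Proposition~\ref{prop:main}. The genuine obstruction to upgrading the observation to a theorem is that these combinatorial conditions can fail (the paper exhibits explicit counterexamples in Figure~\ref{fig:graphs}), which is why the statement is quantified over ``many'' rather than ``all'' polyhedra.
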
			
			
	In fact, $W_2$ and $W_{2,w}$ do not only reproduce the qualitative behavior of $W$ in many cases, but they perform better in a certain sense. As an example consider the ellipsoid in Figure \ref{fig:ellipsoid}. It has been obtained by placing 50 vertices randomly on the surface of an ellipsoid and computing their convex hull. The fact that minimizers of the functionals $W_2$ and $W_{2,w}$ are spherical is surprising. The functionals $W_2$ and $W_{2,w}$ yield considerably more uniform triangulations of the sphere, which is not very surprising. Indeed, we have incorporated this feature explicitly into the definition of $W_{2,w}$ by adding the terms that involve the differences angles at incident edges. The functional $W_2$ shows the same behavior since values that are close to each other yield a smaller sum of squares. Then the rate of numerical convergence is faster, i.e. it takes considerably less iterations of the numerical solver to obtain a gradient norm below a certain threshold. For the example in Figure \ref{fig:ellipsoid} this reflects in the following numbers. After 100 minimization steps for $W$, its value is still of order $10^{-2}$. In contrast, computing 100 minimization steps for $W_2$ (resp. $W_{2,w}$) yields a realization where the value of $W$ is of order only $10^{-9}$ (resp. $10^{-10}$). Because of our choice of the normalization constants we have $W_2$ of order $10^{-10}$ and $W_{2,w}$ of order $10^{-8}$ after minimizing the respective energy during 100 steps. We have also considered different initial realisations of the same combinatorial structure. This way, minimizing $W$ can lead to different realizations, all of them satisfying $W=0$. For $W_2$ and $W_{2,w}$ we have always obtained the same realization up to conformal symmetry.
			
	\begin{figure}				
		\hspace*{\fill}		
		\subfigure[]{\includegraphics[width=0.25\textwidth]{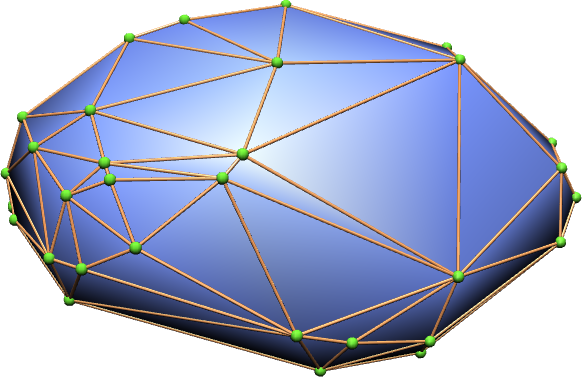}}
		\hspace*{\fill}
		\subfigure[]{\includegraphics[width=0.2\textwidth]{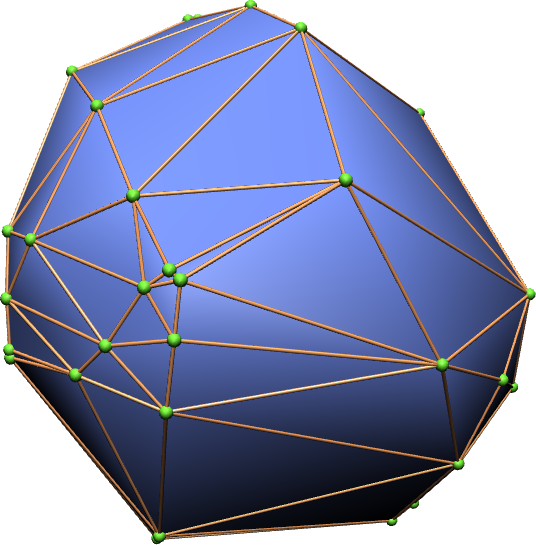}}
		\hspace*{\fill}
		\subfigure[]{\includegraphics[width=.2\textwidth]{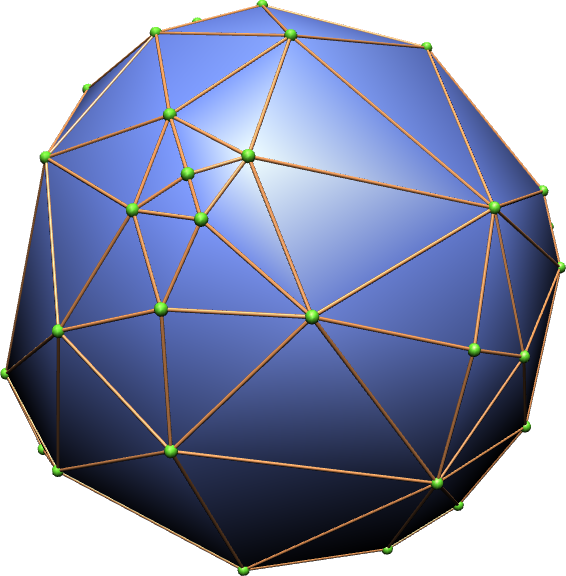}}
		\hspace*{\fill}
		\subfigure[]{\includegraphics[width=.2\textwidth]{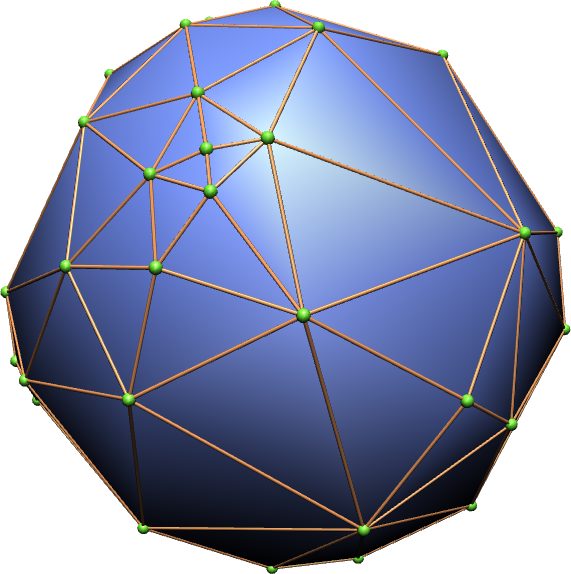}}
		\hspace*{\fill}
		\caption{(a) The original random ellipsoid. (b) The ellipsoid after minimizing $W$. (c) The ellipsoid after minimizing $W_2$. (d) The ellipsoid after minimizing $W_{2,w}$. (b), (c) and (d) are convex inscribed polyhedra.}				
		\label{fig:ellipsoid}
	\end{figure}
			
	The next example is given by the first graph in Figure \ref{fig:graphs}. It is an inscribable polyhedron, i.e. the minimizer for $W$ satisfies $W=0$. For the minimizer of $W_2$ we compute $W_2=0$ but $W>0$. A closer investigation reveals that the minimizer of $W_2$ is a non-Delaunay triangulation of the sphere. In fact, there is one non-Delaunay edge. It is highlighted in the graph by a dotted line. In contrast, the minimizer of $W_{2,w}$ satisfies $W_{2,w}=0$ and also $W=0$, i.e. it is a Delaunay triangulation of the sphere. This is an example where $W_2$ and $W_{2,w}$ yield qualitatively different results. There are also examples for which the minimizer of $W_{2,w}$ is a non-Delaunay triangulation of the sphere. One such example is shown in Figure \ref{fig:graphs}.(b).
			
	\begin{figure}
		\hspace*{\fill}
		\subfigure[]{\includegraphics[width=.3\textwidth]{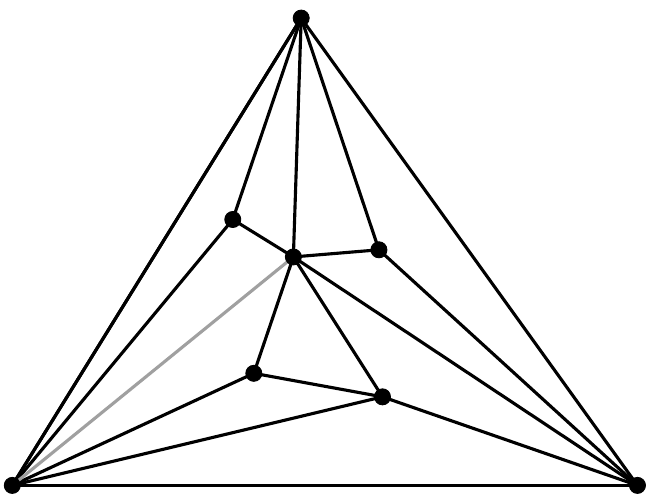}}
		\hspace*{\fill}
		\subfigure[]{\includegraphics[width=.3\textwidth]{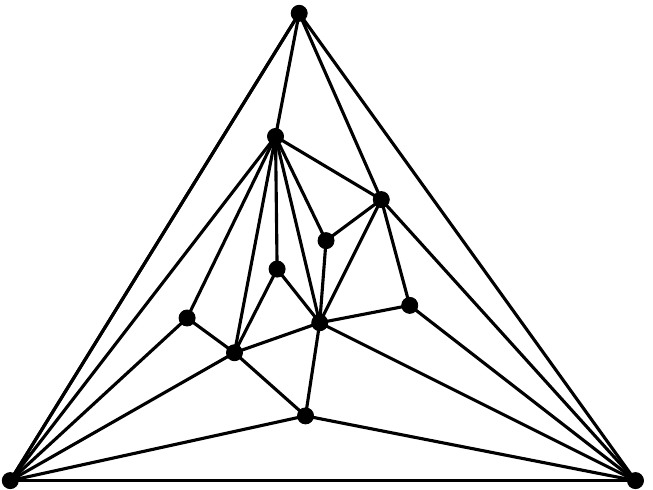}}
		\hspace*{\fill}
		\subfigure[]{\includegraphics[width=.3\textwidth]{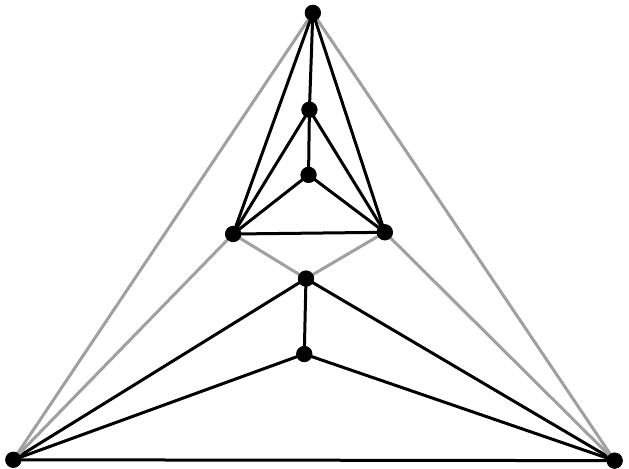}}
		\hspace*{\fill}
		\caption{Three graphs of inscribable type. (a) The graph of a polyhedron for which $W_2$ is minimized by a non-Delaunay triangulation of the sphere. (b) The graph of a polyhedron for which both $W_2$ and $W_{2,w}$ are minimized by a non-Delaunay triangulation of the sphere. (c) The graph of a polyhedron that does not converge while minimizing $W_2$ or $W_{2,w}$.}
		\label{fig:graphs}
	\end{figure}
			
	There are examples that are not covered by Observation \ref{obs:main}. The problem is that there are polyhedra of inscribable type that do not have a realization that minimizes $W_2$ or $W_{2,w}$. Consider the graph in Figure \ref{fig:graphs}.(c). A minimization of $W_2$ or $W_{2,w}$ leads to a realization where several edges collapse. We postpone an explanation of this behavior to the next section.
			
	\subsection{Noninscribable Simplicial Polyhedra}
	In the case of non-inscribable polyhedra, the investigation of the minimizers for $W$, $W_2$ and $W_{2,w}$ is a considerably more difficult task. However, we observe some remarkable phenomena in this case as well.
			
	Consider the example in Figure \ref{fig:selfintersect}. It is not inscribable in a strong sense, but there are convex inscribed realizations with several collapsed edges. Thus if we exclude such degenerate realizations, then $W$ does not have a minimum for this polyhedron. The minimizer for $W_2$ contains a self-intersection but interestingly enough, all its vertices do still lie on a sphere. It is also remarkable that we have $W=2\pi$ for this realization and that the gradient of $W$ vanishes. It is however not a global minimum for $W$. 
			
	\begin{figure}
		\hspace*{\fill}
		\includegraphics[width=.4\textwidth]{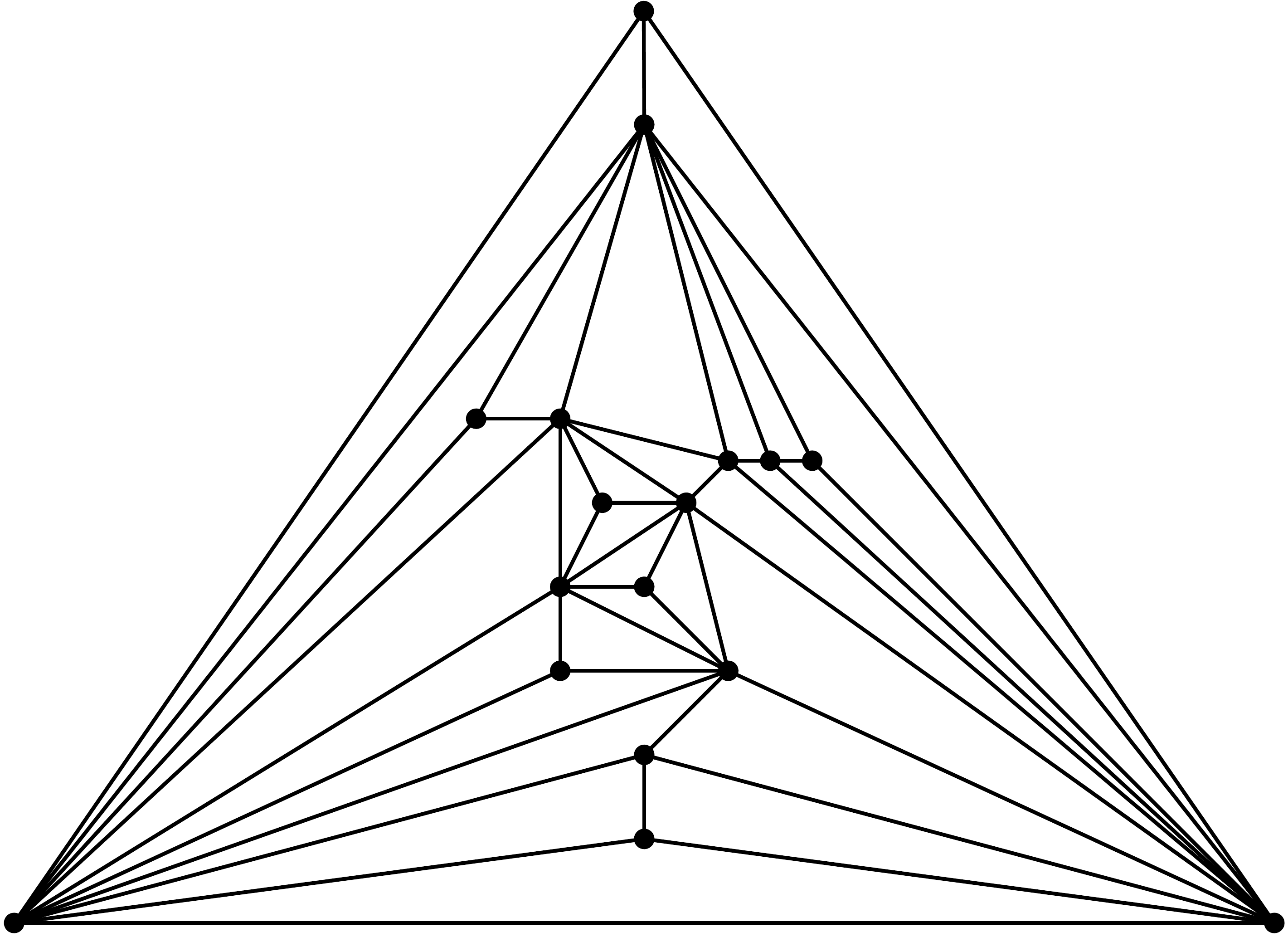}
		\hspace*{\fill}
		\caption{The graph of a polyhedron of non-inscribable type. Its minimizer for $W_2$ contains self-intersections. Minimizing $W_{2,w}$ leads to several collapsed edges.}
		\label{fig:selfintersect}
	\end{figure}			
			
	\subsection{Surfaces of Higher Genus}
	An interesting observation can be made for the minimum of $W_2$ of one particular triangulation of the torus (Figure \ref{fig:torus}). The minimum is attained at the triangulation of a torus of revolution and the ratio of the two radii (measured between appropriate vertices) is equal to $\sqrt{2}$ (up to numerical accuracy). The gradient of $W$ also vanishes for this realization, however this critical point of $W$ is unstable. Starting from the realization in Figure \ref{fig:torus} and minimizing $W$ instead of $W_2$, the numerical solver does not reach the minimal realization. 
			
	Recall the famous Willmore conjecture \cite{willmoreBook} which states that the smooth tori of revolution with a ratio of $\sqrt{2}$ of the two radii (and their M\"obius equivalents) minimize the Willmore energy for tori. The conjecture has recently been proven by Marques and Neves \cite{willmoreProof}.
			
	Computing the value of $W_2$ for the minimal realization gives us $W_2=3.998\pi^2$. By refining the triangulation this value seems to converge to $4\pi^2$. In the smooth case the minimal value of the Willmore energy for tori is equal to $2\pi^2$. 
			
	\begin{figure}
		\hspace*{\fill}
		\subfigure[]{\includegraphics[width=.4\textwidth]{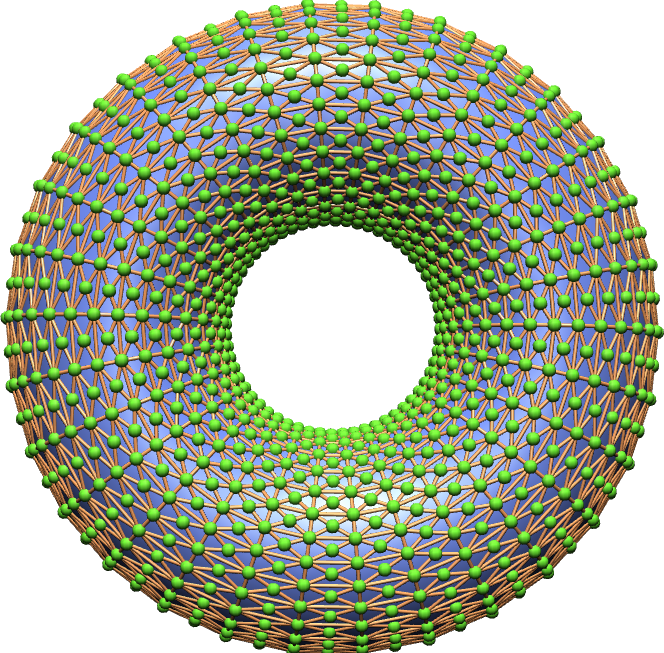}}
		\hspace*{\fill}
		\subfigure[]{\includegraphics[width=.4\textwidth]{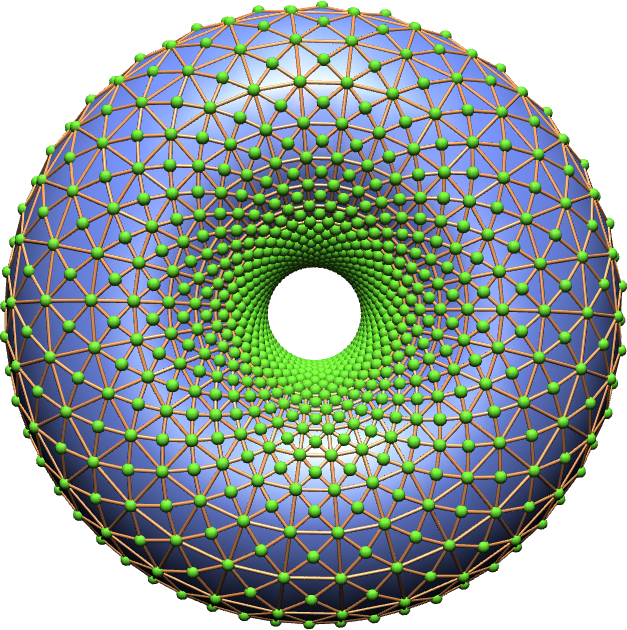}}
		\hspace*{\fill}
		\caption{(a) The original triangulation of a torus of revolution and (b) the result after minimizing $W_2$.}
		\label{fig:torus}
	\end{figure}

	\subsection{Applications in Geometry Processing}
	The Willmore energy functional plays an important role in digital geometry processing and geometric modelling. Applications of the discrete Willmore functional \eqref{eq:defClassic} for non-shrinking surfaces smoothing, surface restoration and hole filling were demonstrated in \cite{WillmoreFlow}. As already mentioned, the main drawback of the functional $W$ is its non-smoothness.
			
	The functionals $W_2$ and $W_{2,w}$ can be applied to the same problems and have some advantages comparing to $W$.
			
	An example is shown in Figure \ref{fig:bunnyhead}. The model is not closed and is treated with fixed boundary conditions, i.e. the boundary curve and tangent planes along it are fixed. The ears of the bunny head cause the solver to run into problems when minimizing $W$. The realization where it gets stuck has many angles $\beta$ with a value smaller than $10^{-3}$ with the smallest angle being even of order $10^{-5}$. Hence the realization is very close to a critical point. In contrast, minimizing $W_2$ makes the bunny head already very spherical after 1000 steps.
			
	The complete bunny shown in Figure \ref{fig:bunny}.(a) is the Stanford bunny in which the holes in the bottom have been filled. Minimizing $W_2$ leads to  a spherical shape with a discrete Willmore energy of $2\pi$. The experiments with the weighted energy $W_{2,w}$ yield even better results. Starting with the model in Figure \ref{fig:bunny}.(a), the surface converges to an inscribed convex realization. After 4000 steps, the value of the discrete Willmore energy is of order $10^{-3}$.
			
	\begin{figure}
		\hspace*{\fill}
		\subfigure[]{\includegraphics[width=.25\textwidth]{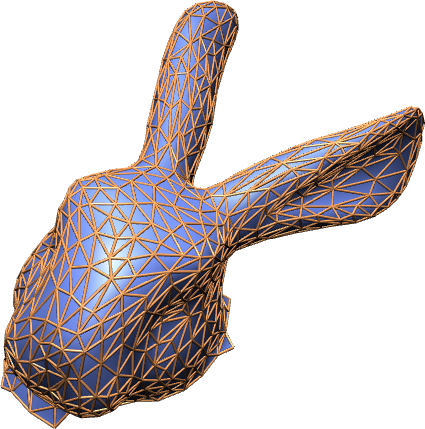}}
		\hspace*{\fill}
		\subfigure[]{\includegraphics[width=.25\textwidth]{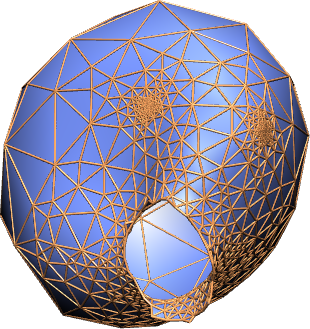}}
		\hspace*{\fill}
		\subfigure[]{\includegraphics[width=.25\textwidth]{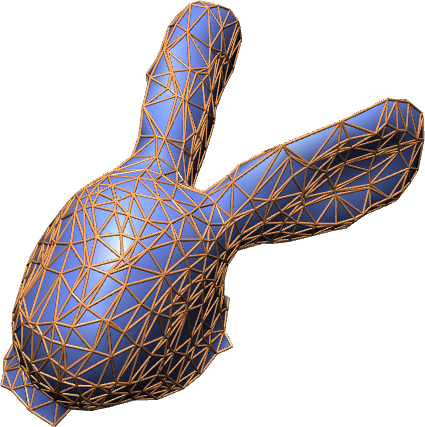}}
		\hspace*{\fill}
		\caption{(a) The original model. (b) The result after 1000 minimization steps for $W_2$. (c) The result after 1000 minimization steps for $W$.}
		\label{fig:bunnyhead}
	\end{figure}		
			
	\begin{figure}
		\hspace*{\fill}
		\subfigure[]{\includegraphics[width=.25\textwidth]{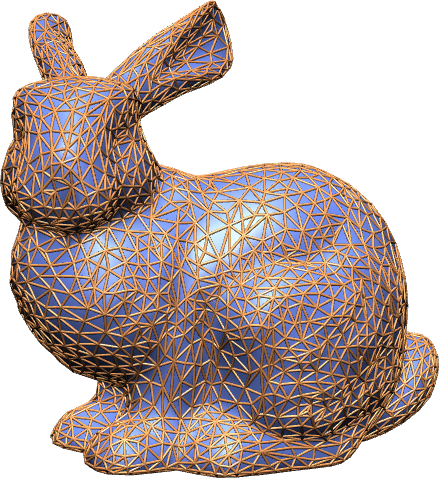}}
		\hspace*{\fill}
		\subfigure[]{\includegraphics[width=.25\textwidth]{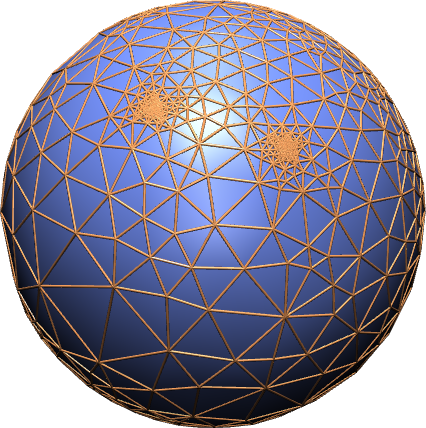}}
		\hspace*{\fill}
		\subfigure[]{\includegraphics[width=.25\textwidth]{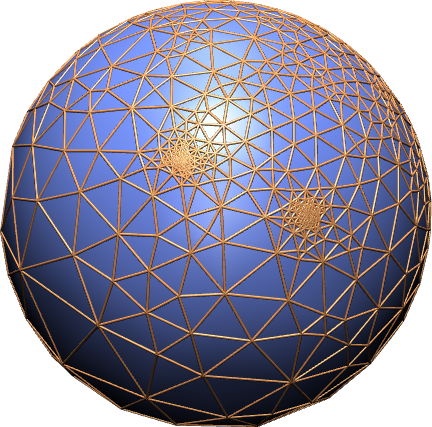}}
		\hspace*{\fill}			
		\caption{(a) The Stanford bunny without holes. (b) The minimizer of $W_2$ after 4000 steps. (c) The minimizer of $W_{2,w}$ after 4000 steps.}
		\label{fig:bunny}
	\end{figure}				
			
	\section{QCA functional and Quadratic Optimization}
	\label{sec:theo}
	For $W$ the minimizers of inscribable polyhedra are convex inscribed realizations. It would be nice to characterize the minimal realizations of these polytopes under $W_2$. In particular it would be interesting to know in which case they are minimizers of $W$, i.e. are convex and inscribed. To investigate the problem, we consider a quadratic program corresponding to $W_2$. At the end of the section we consider also $W_{2,w}$ where similar arguments can be applied.
		
	Suppose we are given the graph $G$ of a simplicial polyhedron of inscribable type. Denote by $\mathcal{V}$ and $\mathcal{E}$ its vertex set and edge set respectively. Now we ignore the geometry and simply consider the intersection angles as arbitrary weights on the edges. The inscribable polyhedra were characterized in \cite{RivinIdeal}.
	\begin{thm}
		\label{thm:rivin}
		Let $\mathcal{P}$ be a convex polyhedron with vertex set $\mathcal{V}$ and edge set $\mathcal{E}$. Let $\beta$ be a weighting of the edges with $0<\beta(e_{ij})<\pi$ for all edges $e_{ij}\in \mathcal{E}$. Then there exists a convex inscribed realization of $\mathcal{P}$ with intersection angles of the circumcircles $\beta$ if and only if the following conditions are satisfied.  
		\begin{compactenum}[(i)]
			\item $\displaystyle \sum_{e_{ij}\sim v_i}\beta(e_{ij})=2\pi$ for every $v_i\in V$. The sum runs over all edges incident with $v_i$. 
			\item $\displaystyle \sum_k\beta(e_k)>2\pi$ for all cycles $e_1^*,\dots,e_n^*$ in the graph of the dual polyhedron that do not bound a face, where $e_k^*$ is the dual edge that corresponds to $e_k$.
		\end{compactenum}
		Moreover, such a realization is unique up to conformal symmetry if it exists.
	\end{thm}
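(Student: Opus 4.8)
The plan is to recognize this as a statement in hyperbolic geometry and to reduce it to Rivin's theorem on ideal polyhedra. First I would pass to the projective (Klein) model of hyperbolic $3$-space $\mathbb{H}^3$, in which the round sphere carrying the vertices becomes the sphere at infinity $\partial_\infty\mathbb{H}^3$. A convex inscribed realization of $\mathcal{P}$ then corresponds to a convex ideal polyhedron in $\mathbb{H}^3$ whose vertices are exactly the ideal points on $\partial_\infty\mathbb{H}^3$, and conversely. The two circumcircles meeting along an edge $e_{ij}$ are precisely the boundary circles of the two hyperbolic face planes, so their external intersection angle $\beta(e_{ij})$ equals the exterior dihedral angle of the ideal polyhedron along that edge, i.e. $\beta=\pi-\theta$ with $\theta$ the interior dihedral angle. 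Since the isometries of $\mathbb{H}^3$ restrict on $\partial_\infty\mathbb{H}^3$ to exactly the M\"obius transformations, ``unique up to conformal symmetry'' becomes ``unique up to hyperbolic isometry.'' Thus the whole statement is equivalent to the assertion that the exterior-dihedral-angle functions of convex ideal polyhedra combinatorially isomorphic to $\mathcal{P}$ are exactly those satisfying (i) and (ii), each such datum being realized by a unique polyhedron.

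For necessity I would argue geometrically. Condition (i): truncate each ideal vertex by a small horosphere; the cross-section is a convex Euclidean polygon whose interior angle at the corner coming from edge $e$ equals the interior dihedral angle $\theta(e)$. Summing the interior angles of this $n_i$-gon gives $\sum_e\theta(e)=(n_i-2)\pi$, i.e. $\sum_e(\pi-\theta(e))=\sum_e\beta(e)=2\pi$. Condition (ii): a dual cycle that does not bound a face is a prismatic circuit, a cyclic sequence of faces cutting the polyhedron along an essential annulus; a limiting Gauss--Bonnet type argument shows that if the corresponding sum of exterior angles were $\le 2\pi$ the polyhedron would be forced to degenerate (the circuit pinching to a point, or the faces failing to close up convexly), so the strict inequality $\sum_k\beta(e_k)>2\pi$ must hold.

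The substance of the theorem is existence together with uniqueness, and here I would follow Rivin's variational method \cite{RivinIdeal}. Let $A\subset(0,\pi)^{\mathcal{E}}$ be the set of angle assignments satisfying (i) and (ii); the equalities (i) and the strict inequalities (ii) cut out an open convex polytope. Let $\Phi$ send a marked convex ideal polyhedron (up to isometry) to its tuple of exterior dihedral angles $\beta$, so that the goal is to prove $\Phi$ is a homeomorphism onto $A$. I would establish this in three steps: (a) \emph{infinitesimal rigidity} --- the differential of $\Phi$ is injective, shown most cleanly via the Hessian of the total hyperbolic volume and the Schl\"afli differential formula $dV=-\tfrac12\sum_e\ell_e\,d\theta_e$ (renormalized at the ideal vertices), which makes a suitable volume functional strictly concave on $A$; (b) \emph{global rigidity} --- two ideal polyhedra with equal dihedral angles are isometric, i.e. $\Phi$ is injective, which follows from the concavity in (a) (at most one critical point) or from a Cauchy-type argument; and (c) \emph{surjectivity} --- every datum in $A$ is attained, via a properness/degree argument showing that the image of $\Phi$ is both open and closed in the connected set $A$.

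The main obstacle will be step (c), the surjectivity, and with it the behaviour of $\Phi$ near the boundary of $A$. The delicate point is exactly why condition (ii) is the right closedness hypothesis: one must show that along any sequence of polyhedra whose angles converge to an \emph{interior} point of $A$ no degeneration occurs (no edge collapses and no two vertices coincide), which is precisely where the strict prismatic inequalities enter, while a sequence whose angles approach the boundary of $A$ is forced to degenerate. Controlling these limits --- equivalently, proving that the maximizer of the concave volume functional lies in the interior and is a nondegenerate polyhedron --- is the technical heart of the argument, with the infinitesimal rigidity of (a), through nondegeneracy of the volume Hessian, the second genuinely hard ingredient.
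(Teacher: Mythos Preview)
The paper does not prove this theorem at all: it is quoted from Rivin's work \cite{RivinIdeal} as a known characterization, and the paper only \emph{uses} it (through Propositions~\ref{prop:quadProg}--\ref{prop:mainWeighted}). So there is no ``paper's own proof'' to compare against.

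That said, your outline is a faithful high-level summary of the standard route to Rivin's theorem. The identification of convex inscribed polyhedra with convex ideal polyhedra in $\mathbb{H}^3$ via the Klein model, and of the external circumcircle angle $\beta(e)$ with the exterior dihedral angle $\pi-\theta(e)$, is exactly the translation one needs; the horospherical-cusp argument for condition~(i) and the prismatic-circuit interpretation of~(ii) are the classical necessity proofs. Your three-step plan for sufficiency (infinitesimal rigidity via the Schl\"afli/volume Hessian, global injectivity, and a properness argument for surjectivity) matches the variational treatment in the literature; you have also correctly located the hard part, namely showing that as the angle data approach the boundary of the polytope $A$ the polyhedra must degenerate, and not before. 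One small caution: Rivin's original argument in \cite{RivinIdeal} is not purely the volume-maximization story but combines it with a continuity/deformation analysis building on the Andreev--Rivin--Hodgson rigidity results, so if you intend to write this up you should be explicit about which version you are following and supply the compactness/degeneration lemmas rather than leaving them as ``a limiting Gauss--Bonnet type argument.''
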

		
	Denote by $M\in\mathbb{R}^{|\mathcal{V}|\times|\mathcal{E}|}$ the incidence matrix of the graph. The set of all $x\in\mathbb{R}^{|\mathcal{E}|}$ that satisfy the constraint that the weights sum up to $2\pi$ around each vertex is then given by solutions of the linear equation $Mx=2\pi\mathbf{1}$ where $\mathbf{1}=(1,\ldots,1)^t\in\mathbb{R}^{|\mathcal{V}|}$. Since we are dealing with the case where $G$ is the graph of a simplicial polyhedron, the matrix $M$ is of full rank $|\mathcal{V}|$ and in particular $MM^t$ is invertible. Thus the following two quadratic programs always have a (unique) solution:
	\begin{align}
		\text{minimize } \Vert x\Vert^2=x^tx \text{ subject to } Mx=2\pi\mathbf{1},
		\label{eq:prog1}\\
		\text{minimize } \Vert x\Vert^2=x^tx \text{ subject to } Mx\geq2\pi\mathbf{1}.
		\label{eq:prog2}
	\end{align}		
					
	By $\Vert\cdot\Vert$ we denote the Euclidean norm. Furthermore, all inequalities between vectors are to be understood component-wise. The angle sum $\sum_{e\sim v}\beta(e)$ for any vertex $v\in\mathcal{V}$ is at least equal to $2\pi$ for every realization of any surface (see \cite{WillmoreNeu}). This means that the solution space of $Mx\geq2\pi\mathbf{1}$ is a superset of all realizable angle sets. In order to find a sufficient condition for the minimum of $W_2$ to be inscribed and convex, we state the following
		
	\begin{prop}
		\label{prop:quadProg}
		Let $x$ and $y$ be the unique solutions of \eqref{eq:prog1} and \eqref{eq:prog2} respectively. Then $x$ and $y$ are equal if and only if the (unique) solution of $MM^t\lambda=2\pi\mathbf{1}$ is non-negative in every component.
	\end{prop}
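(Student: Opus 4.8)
The plan is to treat both programs through their Karush--Kuhn--Tucker (KKT) conditions and to exploit that, because the objective $\Vert x\Vert^2$ is strictly convex and the feasible sets are convex, the KKT conditions are simultaneously necessary and sufficient for global optimality. The whole argument hinges on the observation that the vector $\lambda$ appearing in the statement is nothing but the Lagrange multiplier of the equality-constrained program \eqref{eq:prog1}, and that the minimizer of \eqref{eq:prog1} can be written explicitly as $x = M^t\lambda$.

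First I would make this closed form explicit. Since $M$ has full row rank, $MM^t$ is invertible and the unique minimizer of \eqref{eq:prog1} is the minimum-norm solution $x = M^t(MM^t)^{-1}(2\pi\mathbf{1})$. Setting $\lambda := (MM^t)^{-1}(2\pi\mathbf{1})$, which is exactly the unique solution of $MM^t\lambda = 2\pi\mathbf{1}$, gives $x = M^t\lambda$ at once, and one checks $Mx = MM^t\lambda = 2\pi\mathbf{1}$, so that $x$ is in particular feasible for \eqref{eq:prog2}.

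Next I would record the KKT characterization of the minimizer $y$ of \eqref{eq:prog2}: a feasible $y$ is optimal if and only if there is a multiplier $\mu\ge 0$ (the factor $2$ absorbed into $\mu$) with stationarity $y = M^t\mu$, primal feasibility $My\ge 2\pi\mathbf{1}$, and complementary slackness $\mu_i\bigl((My)_i-2\pi\bigr)=0$ for every $i$. For the direction ``$\lambda\ge 0\Rightarrow x=y$'' I would simply verify that $x$ satisfies this system with the choice $\mu=\lambda$: stationarity reads $x=M^t\lambda$, primal feasibility is $Mx=2\pi\mathbf{1}$, dual feasibility is precisely the hypothesis $\lambda\ge 0$, and complementary slackness holds trivially because every constraint is active ($Mx-2\pi\mathbf{1}=0$). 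Sufficiency of KKT for this convex program then forces $x$ to be the unique minimizer, i.e.\ $x=y$.

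For the converse ``$x=y\Rightarrow\lambda\ge 0$'' I would apply the necessity of KKT at $y=x$ to obtain some multiplier $\mu\ge 0$ with $x=M^t\mu$. Comparing with $x=M^t\lambda$ yields $M^t(\mu-\lambda)=0$, and since $M$ has full row rank the map $M^t$ is injective, so $\mu=\lambda$ and hence $\lambda=\mu\ge 0$. The one point demanding care is that the dual multiplier attached to $x$ is genuinely $\lambda$ and not merely some non-negative vector, and this is supplied exactly by the injectivity of $M^t$; this is the step I expect to be the crux, since without full rank one could only conclude that \emph{some} KKT multiplier is non-negative. The weighted functional $W_{2,w}$ is handled by the same reasoning after replacing the Euclidean norm by the $N$-weighted norm $x^tNx$, so that $x=N^{-1}M^t\lambda$ with $\lambda$ solving $MN^{-1}M^t\lambda=2\pi\mathbf{1}$, and the injectivity of $M^t$ again closes the converse.
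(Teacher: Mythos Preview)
Your proof is correct. Both directions are handled cleanly, and you correctly identify that the injectivity of $M^t$ (equivalently, the full row rank of $M$) is what pins down the multiplier uniquely and makes the converse go through.

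The route, however, differs from the paper's. The paper does not invoke the KKT system for \eqref{eq:prog2} as a black box. Instead it argues directly: if $x\neq y$ then $\delta:=y-x$ satisfies $M\delta\ge 0$ and, by expanding $\Vert x+\delta\Vert^2<\Vert x\Vert^2$, also $\delta^t x<0$; conversely, any such $\delta$ produces a feasible point $x+\varepsilon\delta$ of strictly smaller norm. Thus $x=y$ is equivalent to the nonexistence of a $\delta$ with $M\delta\ge 0$ and $\delta^t x<0$, and Farkas' lemma converts this into the existence of $\lambda\ge 0$ with $M^t\lambda=x$. The Lagrange condition for \eqref{eq:prog1} then identifies this $\lambda$ (up to the factor~$2$) with the solution of $MM^t\lambda=2\pi\mathbf{1}$. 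So the paper's proof is more elementary in the sense that it only cites Farkas and otherwise computes by hand, whereas your argument packages the same linear-algebraic content into the sufficiency and necessity of KKT for a convex program with linear constraints. Your version is shorter and perhaps more transparent to readers fluent in convex optimization; the paper's version keeps the prerequisites minimal and makes the descent-direction interpretation explicit.
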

			
	\begin{proof}
		Suppose that the two minima do not coincide, that is $\Vert  x\Vert>\Vert  y\Vert$. Let $\delta= y- x$ be the difference of the two solutions. Then we have 
		\begin{equation}
			\label{eq:farkas1}
			M\delta=M y-M x\geq2\pi\mathbf{1}-2\pi\mathbf{1}=0.
		\end{equation} 
		Furthermore we know that $\Vert  x\Vert^2>\Vert y\Vert^2$ and hence 		
		\begin{equation*}
			0>\sum_{i=1}^{|E|}\left(( x_i+\delta_i)^2- x_i^2\right)=\sum_{i=1}^{|E|}\left(2 x_i\delta_i+\delta_i^2\right)>2\sum_{i=1}^{|E|} x_i\delta_i.
		\end{equation*}
		Thus we obtain
		\begin{equation}
			\label{eq:farkas2}
			\delta^tx<0.
		\end{equation}
		On the other hand if there is a vector $\delta$ satisfying \eqref{eq:farkas1} and \eqref{eq:farkas2}, we see that $\varepsilon\delta+x$ with some small $\varepsilon>0$ satisfies $M(\varepsilon\delta+x)\geq2\pi\mathbf{1}$ and $\Vert x+\varepsilon\delta\Vert<\Vert x\Vert$. Because of $\Vert y\Vert\leq\Vert x+\varepsilon\delta\Vert$ this implies that $y$ and $x$ cannot coincide. 
			
		Hence the equality $x=y$ is equivalent to the non-existence of $\delta\in\mathbb{R}^{|E|}$ satisfying \eqref{eq:farkas1} and \eqref{eq:farkas2}. By Farkas' lemma (see \cite{Ziegler}), such a $\delta$ exists if and only if there is no $\lambda\geq0$ with $M^t\lambda= x$. Since $M^t$ is injective, $\lambda$ is unique if it exists. It remains to show that it always exists and that it is equal to the unique solution of $MM^t\lambda=2\pi\mathbf{1}$. 
			
		The vector $x$ is the solution of the minimization of $x^tx$ subject to $Mx=2\pi\mathbf{1}$. The respective Lagrange function is given by
			\[ L(x,\tilde\lambda)=x^tx-\tilde\lambda^tMx, \]
		where $\tilde{\lambda}$ is the Lagrange multiplier. The critical point is given by
			\[2x^t-\tilde\lambda^t M=0\Leftrightarrow M^t\tilde\lambda=2x.\]
		Here we see that up to a multiplication by $2$, a solution $\lambda$ of $M^t\lambda=x$ is given by the Lagrange multipliers. Thus the solution always exists and since it is unique, it has to coincide with the solution of $MM^t\lambda=Mx=2\pi\mathbf{1}$.\qed		
	\end{proof}
			
	For any incidence matrix $M$ define 
	\begin{equation}	
		\label{eq:defBeta}		
		\beta(M):=2\pi M^t(MM^t)^{-1}\mathbf{1}.
	\end{equation}
	The matrix $MM^t\in\mathbb{R}^{|\mathcal{V}|\times|\mathcal{V}|}$ is the adjacency matrix of the graph with the valences of the vertices on the diagonal. It is called the signless Laplacian of the graph (see \cite{Laplacian}). The matrix $M^t(MM^t)^{-1}$ is known as the Moore-Penrose pseudoinverse of $M$ (see \cite{inverses}). The proposition shows that in the case $2\pi(MM^t)^{-1}\mathbf{1}\geq0$ it suffices to check whether the vector $\beta(M)$ satisfies  $0<\beta(M)<\pi$ component-wise and condition (ii) from Theorem \ref{thm:rivin}. If this is the case then the minimum of $W_2$ is inscribed and convex and this minimum is unique up to conformal symmetry. We will derive some sufficient conditions for this.
			
	If we assume $2\pi(MM^t)^{-1}\mathbf{1}>0$ instead of $2\pi(MM^t)^{-1}\mathbf{1}\geq0$ then $0<\beta(M)$ is obviously satisfied. Also $\beta(M)<\pi$ holds. Indeed, let us assume that there exists an edge $e$ with $\beta(e)\geq\pi$. Let us denote the corresponding weight by $\beta_e=\beta(e)$. Consider a perturbation of $\beta(M)$ as in Figure \ref{fig:perturb}. Around each vertex the $\beta$'s sum up to $2\pi$ and the perturbation sums up to 0. In particular we have $\beta_a+\beta_b\leq\pi\leq \beta_e$ and $\beta_c+\beta_d\leq\pi\leq \beta_e$ and thus for any $\varepsilon$ satisfying $0<5\varepsilon<\beta_f+\beta_g$,
	\begin{align*}
		&(\beta_e-2\varepsilon)^2+(\beta_a+\varepsilon)^2+(\beta_b+\varepsilon)^2\\
		&+(\beta_c+\varepsilon)^2+(\beta_d+\varepsilon)^2
		+(\beta_f-\varepsilon)^2+(\beta_g-\varepsilon)^2\\
		=&\beta_e^2+\beta_a^2+\beta_b^2+\beta_c^2+\beta_d^2+\beta_f^2+\beta_g^2\\
		&+2\varepsilon(5\varepsilon+\beta_a+\beta_b+\beta_c+\beta_d-2\beta_e-\beta_f-\beta_g)\\
		\leq&\beta_e^2+\beta_a^2+\beta_b^2+\beta_c^2+\beta_d^2+\beta_f^2+\beta_g^2+2\varepsilon(5\varepsilon-\beta_f-\beta_g)\\
		<&\beta_e^2+\beta_a^2+\beta_b^2+\beta_c^2+\beta_d^2+\beta_f^2+\beta_g^2.
	\end{align*}
	This contradicts the minimality of $\beta(M)$ and hence we have $\beta(M)<\pi$.
			
	\begin{figure}	
		\hspace*{\fill}		
		\begin{overpic}[scale=1]{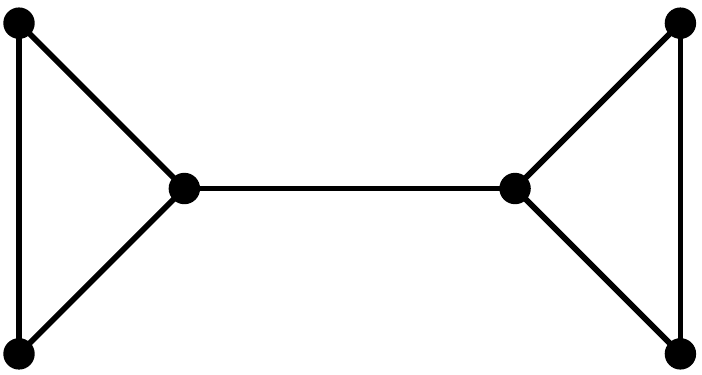}
			\put(42,19){$\beta_e-2\varepsilon$}
			\put(16,10){$\beta_a+\varepsilon$}
			\put(18,37){$\beta_b+\varepsilon$}
			\put(-13,25){$\beta_f-\varepsilon$}
			\put(68,10){$\beta_c+\varepsilon$}
			\put(66,37){$\beta_d+\varepsilon$}
			\put(98,25){$\beta_g-\varepsilon$}
			
		\end{overpic}
		\hspace*{\fill}
		\caption{Perturbing the edge weights on a subgraph.}
		\label{fig:perturb}
	\end{figure}
			
	The more complicated question is whether (ii) from Theorem \ref{thm:rivin} is satisfied. The general answer is no as the example in Figure \ref{fig:graphs}.(c) shows. For the angles $\beta(M)$ there is a cocycle (highlighted in the graph) with the angle sum strictly less than $2\pi$. This reflects in the fact that when we minimize $W_2$ numerically several edges collapse.
			
	Let us formulate this claim.
	\begin{prop}
		\label{prop:main}
		Let $\mathcal{P}$ be a polyhedron of inscribable type with incidence matrix $M$. Let $\lambda$ be given by $\lambda=2\pi (MM^t)^{-1}\mathbf{1}$ and let $\beta(M)$ be given by \eqref{eq:defBeta}. Assume that the following two properties are satisfied:
		\begin{compactenum}[(i)]
			\item $\lambda>0$ component-wise
			\item $\beta(M)$ satisfies condition (ii) from Theorem \ref{thm:rivin}.
		\end{compactenum}
		Then the convex inscribed realization given by the angles $\beta(M)$ is a global minimizer of $W_2$. Furthermore, the minimum is unique up to conformal symmetry. 
	\end{prop}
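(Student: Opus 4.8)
The plan is to identify the minimization of $W_2$ with the quadratic program \eqref{eq:prog2} and then invoke Theorem \ref{thm:rivin} to certify that the program's solution is geometrically realized. Since $W_2(S)=\|x\|^2-c$ differs from $\|x\|^2$ only by the combinatorial constant $c$, minimizing $W_2$ over all realizations of $\mathcal{P}$ is the same as minimizing $\|x\|^2$ over the set of angle vectors $x$ that arise from some realization. As recalled in the text, every realization satisfies $\sum_{e\sim v}\beta(e)\ge 2\pi$ at each vertex, so every realizable $x$ is feasible for \eqref{eq:prog2}; consequently the set of realizable angle vectors is contained in $\{x:Mx\ge 2\pi\mathbf{1}\}$, and the minimum of $\|x\|^2$ over realizations is bounded below by $\|y\|^2$, where $y$ is the solution of \eqref{eq:prog2}. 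First I would record this relaxation bound, the point being that the feasible region of \eqref{eq:prog2} is an explicit convex superset of the (only implicitly described) realizable angles.

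Next I would use the two hypotheses to pin down $y$. By assumption (i) we have $\lambda=2\pi(MM^t)^{-1}\mathbf{1}>0$, so Proposition \ref{prop:quadProg} gives $y=x=\beta(M)$, where $x$ solves the equality-constrained program \eqref{eq:prog1}; in particular $M\beta(M)=2\pi\mathbf{1}$, which is condition (i) of Theorem \ref{thm:rivin}. The strict positivity of $\lambda$ also yields $0<\beta(M)<\pi$ by the perturbation argument carried out above (Figure \ref{fig:perturb}). Together with assumption (ii), namely condition (ii) of Theorem \ref{thm:rivin}, all hypotheses of Rivin's theorem are met, so there is a convex inscribed realization of $\mathcal{P}$ whose circle intersection angles are exactly $\beta(M)$, and it is unique up to conformal symmetry.

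This closes the sandwich: $\beta(M)=y$ is simultaneously a feasible point of \eqref{eq:prog2} attaining the lower bound $\|y\|^2$ and an actually realizable angle vector. Hence $\|\beta(M)\|^2$ is the minimal value of $\|x\|^2$ over all realizations, so the convex inscribed realization produced by Theorem \ref{thm:rivin} is a global minimizer of $W_2$; since $\|\beta(M)\|^2=4\pi^2\mathbf{1}^t(MM^t)^{-1}\mathbf{1}=c$, the minimal value of $W_2$ is in fact $0$, which also motivates the choice of the normalization constant. For uniqueness I would use that the strictly convex program \eqref{eq:prog2} has a unique solution $y$: any global minimizer of $W_2$ has an angle vector that is realizable, hence feasible for \eqref{eq:prog2}, and attains $\|y\|^2$, forcing its angle vector to equal $\beta(M)$; the uniqueness clause of Theorem \ref{thm:rivin} then identifies the realization up to conformal symmetry.

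The main obstacle, and the reason both hypotheses are needed, is precisely the passage between the explicit polyhedral relaxation and the genuine set of realizable angles. The relaxation $Mx\ge 2\pi\mathbf{1}$ is easy but too generous; what makes the argument work is that hypothesis (i) forces the relaxed optimum $y$ onto the equality face $Mx=2\pi\mathbf{1}$ via Proposition \ref{prop:quadProg}, while hypotheses (i) and (ii) together certify, through Theorem \ref{thm:rivin}, that this particular point is not merely feasible for the relaxation but is actually realized by a convex inscribed polyhedron. Without (ii) the point $\beta(M)$ may violate a non-facial cocycle inequality and fail to be realizable, which is exactly the degenerate ``collapsing edges'' behaviour observed for the graph in Figure \ref{fig:graphs}.(c).
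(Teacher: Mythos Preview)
Your argument is correct and matches the paper's own reasoning, which is not given in a separate proof block but is assembled in the discussion surrounding the proposition: relax to the quadratic program \eqref{eq:prog2}, use Proposition~\ref{prop:quadProg} under hypothesis~(i) to identify the optimum with $\beta(M)$, and then invoke Theorem~\ref{thm:rivin} via the perturbation bound $0<\beta(M)<\pi$ together with hypothesis~(ii). One small step worth making explicit in your uniqueness paragraph is that any realization with angle vector $\beta(M)$ satisfies $M\beta(M)=2\pi\mathbf{1}$ and hence $W=0$, so by Theorem~\ref{thm:willmore} it is already convex and inscribed, which is what justifies applying the uniqueness clause of Theorem~\ref{thm:rivin}.
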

			
	For the angles $\beta(M)$ we have
		\[\beta(M)^t\beta(M)=4\pi^2 \mathbf{1}^t(MM^t)^{-1}\mathbf{1},\] 
	which motivates the choice of the normalization constant in the definition of $W_2$. 
			
	Empirical data suggests that condition (i) is not necessary and can be weakened to $\beta(M)>0$. The problem is that we have no tool to characterize realizable angles as soon as they do not correspond to convex inscribed realizations. 
			
	We have seen that it can happen that $W_2$ is minimized by an inscribed but non-Delaunay realization (Figure \ref{fig:graphs}.(a)). The corresponding angles of the minimizer and the abstract angles given by $\beta(M)$ are shown in Table \ref{tab:nonins}.			
	\begin{table}
		\caption{The angles of the minimizer of $W_2$ obtained numericaly versus the abstract angles $\beta(M)$ given by \eqref{eq:defBeta} for the simplicial surface in Figure \ref{fig:graphs}.(a). All values are divided by $\pi$ and sorted in ascending order.}
		\label{tab:nonins}
		{\scriptsize
		\begin{tabular*}{\linewidth}{@{\extracolsep{\fill}}D{.}{.}{10}D{.}{.}{10}@{}}
			\toprule
			\multicolumn{1}{l}{\normalsize Angles after numerical minimization}  &
			\multicolumn{1}{l}{\normalsize Abstract angles $\beta(M)$ given by \eqref{eq:defBeta}}\\
			\midrule
			0.0295374462	&	-0.0295374466	\\
			0.0559262333	&	0.0559262314	\\
			0.1364420123	&	0.1364420121	\\
			0.1587825189	&	0.1587825174	\\
			0.2392983002	&	0.2392982982	\\
			0.2475447917	&	0.2475447935	\\
			0.3247619735	&	0.3247619762	\\
			0.3504010798	&	0.3504010795	\\
			0.5057350595	&	0.5057350626	\\
			0.5142814330	&	0.5142814304	\\
			0.5163805365	&	0.5163805383	\\
			0.5696079164	&	0.5696079166	\\
			0.6085913486	&	0.6085913487	\\
			0.6724641987	&	0.6724642027	\\
			0.7026013894	&	0.7026013944	\\
			0.7579278849	&	0.7579278807	\\
			0.7831171776	&	0.7831171752	\\
			0.8856735920	&	0.8856735887	\\
			\bottomrule
		\end{tabular*}}
	\end{table}			
	Since the first value in the right-hand column is negative, these values cannot correspond to realizable angles. It is however remarkable that the sign change is the only difference between the two columns (up to numeric accuracy). This phenomenon is still to be clarified.

	Finally we briefly mention how to perform a similar treatment for $W_{2,w}$. The main ingredient is the diagonal matrix $N\in\mathbb{R}^{|\mathcal{E}|\times|\mathcal{E}|}$ that has the value $n_i+n_j$ in the row and column corresponding to the edge $e_{ij}\in\mathcal{E}$. Recall that $n_i$ denotes the valence of the vertex $v_i\in\mathcal{V}$. Thus we now consider the quadratic programs that minimize $x^tNx$ subject to $Mx=2\pi\mathbf{1}$ or $Mx\geq2\pi\mathbf{1}$ respectively. Furthermore, instead of $\beta(M)$ we now consider $\tilde{\beta}(M)$ given by 
	\begin{equation}
		\label{eq:defBetaTilde}
		\tilde{\beta}(M)=2\pi N^{-1}M^t(MN^{-1}M^t)^{-1}\mathbf{1}.
	\end{equation}
	An analog of Proposition \ref{prop:main} then reads as follows.
		
	\begin{prop}
		\label{prop:mainWeighted}
		Let $\mathcal{P}$ be a polyhedron of inscribable type with incidence matrix $M$. Let $\lambda$ be given by $\lambda=2\pi(MN^{-1}M^t)^{-1}\mathbf{1}$ and let $\tilde{\beta}(M)$ be given by \eqref{eq:defBetaTilde}. Assume that the following two conditions are satisfied:
		\begin{compactenum}[(i)]
			\item $\lambda>0$ component-wise
			\item $\tilde{\beta}(M)$ satisfies condition (ii) from Theorem \ref{thm:rivin}.
		\end{compactenum}
		Then the convex inscribed realization given by the angles $\tilde{\beta}(M)$ is a global minimizer of $W_{2,w}$. Furthermore, the minimum is unique up to conformal symmetry. 
	\end{prop}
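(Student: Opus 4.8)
The plan is to reproduce the proof of Proposition~\ref{prop:main} almost verbatim, replacing the Euclidean inner product by the $N$-weighted bilinear form $\langle u,v\rangle := u^t N v$, which is a genuine inner product since $N$ is a positive diagonal matrix; I will write $\Vert x\Vert_N^2 := x^t N x$. Because $M$ has full rank $|\mathcal{V}|$ and $N^{-1}$ is positive definite, $MN^{-1}M^t$ is invertible, so both weighted programs (minimize $x^t N x$ subject to $Mx=2\pi\mathbf{1}$, respectively $Mx\ge 2\pi\mathbf{1}$) have unique solutions $x$ and $y$. A Lagrange computation identical to the one in Proposition~\ref{prop:quadProg}, now for $L(x,\tilde\lambda)=x^tNx-\tilde\lambda^tMx$, gives the stationarity condition $M^t\tilde\lambda=2Nx$ and identifies the equality-constrained minimizer as $\tilde\beta(M)$ of \eqref{eq:defBetaTilde}. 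Since the angle vector $x$ of any realization satisfies $W_{2,w}(S)=x^tNx-c_w$, minimizing $W_{2,w}$ amounts to minimizing $x^tNx$.

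First I would establish the weighted analogue of Proposition~\ref{prop:quadProg}. Repeating the Farkas argument in the $N$-inner product: if $\Vert x\Vert_N>\Vert y\Vert_N$ then $\delta:=y-x$ satisfies $M\delta\ge 0$ and $(Nx)^t\delta<0$, and conversely such a $\delta$ contradicts the minimality of $x$; hence $x=y$ holds precisely when there exists $\lambda\ge 0$ with $M^t\lambda=Nx$. The only change from the unweighted case is that stationarity reads $M^t\lambda=N\tilde\beta(M)$ rather than $M^t\lambda=\beta(M)$, so the relevant multiplier is exactly $\lambda=2\pi(MN^{-1}M^t)^{-1}\mathbf{1}$, and condition (i), $\lambda>0$, is precisely the statement that the two programs agree. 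Condition (i) also yields positivity of the angles for free: since $M$ is the unsigned incidence matrix, the entry of $N\tilde\beta(M)=M^t\lambda$ at the edge $e_{ij}$ equals $\lambda_i+\lambda_j>0$, and $N>0$ forces $\tilde\beta(M)>0$ componentwise; equivalently $\tilde\beta_{ij}=(\lambda_i+\lambda_j)/(n_i+n_j)$.

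With these ingredients the conclusion follows as in the unweighted case. The angle sum at every vertex is at least $2\pi$ for every realization, so each realizable angle vector lies in $\{x:Mx\ge 2\pi\mathbf{1}\}$, whence $x^tNx\ge \Vert y\Vert_N^2$. Under condition (i) the two programs agree, so $y=\tilde\beta(M)$, and under condition (ii) together with $0<\tilde\beta(M)<\pi$ Theorem~\ref{thm:rivin} produces a convex inscribed realization whose circle angles are exactly $\tilde\beta(M)$. This realization attains the lower bound and is therefore a global minimizer of $W_{2,w}$; its energy is $\tilde\beta(M)^tN\tilde\beta(M)=\lambda^tMN^{-1}M^t\lambda=4\pi^2\mathbf{1}^t(MN^{-1}M^t)^{-1}\mathbf{1}=c_w$, which also explains the normalization. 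Uniqueness of the solution of the inequality-constrained program forces every minimizer to have angle vector $\tilde\beta(M)$, and Rivin's uniqueness clause upgrades this to uniqueness up to conformal symmetry.

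The step I expect to be the real obstacle is the bound $\tilde\beta(M)<\pi$. In the unweighted setting this is proved by the local perturbation of Figure~\ref{fig:perturb}: assuming $\beta_e\ge\pi$, the vertex relations $\beta_a+\beta_b\le\pi\le\beta_e$ and $\beta_c+\beta_d\le\pi\le\beta_e$ make the $\varepsilon$-linear part of the change in $\sum\beta^2$ nonpositive, contradicting minimality. With the weights $n_i+n_j$ present, the same perturbation $\delta\in\ker M$ produces the linear term $-2w_e\tilde\beta_e+w_a\tilde\beta_a+w_b\tilde\beta_b+w_c\tilde\beta_c+w_d\tilde\beta_d-w_f\tilde\beta_f-w_g\tilde\beta_g$, in which the clean cancellation is lost, because $w_a\tilde\beta_a+w_b\tilde\beta_b$ need not be dominated by $2w_e\tilde\beta_e$ once the weights differ. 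The fix I would pursue is to keep the perturbation inside $\ker M$, so that the constraint $Mx=2\pi\mathbf{1}$ is preserved, while adapting its support and coefficients to the weights so that the weighted linear term is again controlled by the vertex angle-sum relations; alternatively one can try to deduce $\tilde\beta(M)<\pi$ directly from the explicit formula $\tilde\beta_{ij}=(\lambda_i+\lambda_j)/(n_i+n_j)$ by an exchange argument. Making this bound rigorous is the one place where the proof genuinely departs from the unweighted template.
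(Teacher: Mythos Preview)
Your approach is exactly what the paper does: it gives no separate proof of this proposition, only the remark that one performs ``a similar treatment for $W_{2,w}$'' by passing to the $N$-weighted quadratic programs and replacing $\beta(M)$ by $\tilde\beta(M)$. The $N$-inner-product version of the Farkas/Lagrange argument, the identification $M^t\lambda=N\tilde\beta(M)$ (and hence the explicit formula $\tilde\beta_{ij}=(\lambda_i+\lambda_j)/(n_i+n_j)$, which the paper does not spell out), and the appeal to Theorem~\ref{thm:rivin} are precisely this ``similar treatment''; on all of these points your write-up is more explicit than the paper.

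Your reservation about the bound $\tilde\beta(M)<\pi$ is justified and is not addressed by the paper in the weighted case. In the unweighted proof the contradiction really comes from the first-order condition $\beta\in\mathrm{im}\,M^t$: the perturbation $\delta$ of Figure~\ref{fig:perturb} lies in $\ker M$, so $\beta^t\delta=0$, while the hypothesis $\beta_e\ge\pi$ together with the vertex sums forces $\beta^t\delta\le-(\beta_f+\beta_g)<0$. In the weighted case the first-order condition reads $N\tilde\beta\in\mathrm{im}\,M^t$, so only $(N\tilde\beta)^t\delta=0$ is guaranteed, and the same $\delta$ carries no information about $\tilde\beta_e$ itself; your diagnosis of why the argument does not transfer verbatim is correct. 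The paper simply does not revisit this step for $W_{2,w}$, so your proof is at least as complete as the paper's own treatment, and your flagging of this point as the one genuine departure from the unweighted template is accurate.
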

			
	Again, this motivates the choice of the normalization constant
		 \[\tilde{\beta}(M)^tN\tilde{\beta}(M)=4\pi^2\mathbf{1}^t(MN^{-1}M^t)^{-1}\mathbf{1}\]
	in Defintion \ref{def:weighted}.

	\bibliography{quadraticEnergy}

\begin{thebibliography}{10}

\bibitem{inverses}
A.~Ben-Israel and T.~N.~E. Greville.
\newblock {\em Generalized inverses}.
\newblock Springer-Verlag, New York, second edition, 2003.

\bibitem{blaschke}
W.~Blaschke.
\newblock {\em Vorlesungen über {D}ifferentialgeometrie {III}}.
\newblock Grundlehren der mathematischen Wissenschaften. Springer, 1929.

\bibitem{WillmoreAlt}
A.~I. Bobenko.
\newblock A conformal energy for simplicial surfaces.
\newblock In J.~E. Goodman, J.~Pach, and E.~Welzl, editors, {\em Combinatorial
  and computational geometry}, volume~52 of {\em Math. Sci. Res. Inst. Publ.},
  pages 135--145. Cambridge Univ. Press, Cambridge, 2005.

\bibitem{WillmoreNeu}
A.~I. Bobenko.
\newblock Surfaces from circles.
\newblock In A.~I. Bobenko, P.~Schr{\"o}der, J.~M. Sullivan, and G.~M. Ziegler,
  editors, {\em Discrete differential geometry}, volume~38 of {\em Oberwolfach
  Semin.}, pages 3--35. Birkh\"auser, Basel, 2008.

\bibitem{WillmoreFlow}
A.~I. Bobenko and P.~Schröder.
\newblock Discrete {W}illmore flow.
\newblock In M.~Desbrun and H.~Pottmann, editors, {\em Eurographics Symposium
  on Geometry Processing}, pages 101--110, Vienna, Austria, 2005. Eurographics
  Association.

\bibitem{Clarenz}
U.~Clarenz, U.~Diewald, G.~Dziuk, M.~Rumpf, and R.~Rusu.
\newblock A finite element method for surface restoration with smooth boundary
  conditions.
\newblock {\em Comput. Aided Geom. Design}, 21(5):427--445, 2004.

\bibitem{Crane}
K.~Crane.
\newblock Conformal geometry processing.
\newblock Phd thesis, Caltech, 2013.

\bibitem{CranePS}
K.~Crane, U.~Pinkall, and P.~Schröder.
\newblock Robust fairing via conformal curvature flow.
\newblock {\em ACM Trans. Graph.}, 32(4):61:1--10, 2013.

\bibitem{Laplacian}
D.~Cvetkovi{\'c}, P.~Rowlinson, and S.~K. Simi{\'c}.
\newblock Signless {L}aplacians of finite graphs.
\newblock {\em Linear Algebra Appl.}, 423(1):155--171, 2007.

\bibitem{RivinIdeal}
C.~D. Hodgson, I.~Rivin, and W.~D. Smith.
\newblock A characterization of convex hyperbolic polyhedra and of convex
  polyhedra inscribed in the sphere.
\newblock {\em Bull. Amer. Math. Soc. (N.S.)}, 27(2):246--251, 1992.

\bibitem{willmoreProof}
F.~C. Marques and A.~Neves.
\newblock Min-max theory and the {W}illmore conjecture.
\newblock {\em Ann. Math.}, 179:683--782, 2014.

\bibitem{Tao}
T.~Munson, J.~Sarich, S.~Wild, S.~Benson, and L.~{Curfman McInnes}.
\newblock {TAO} 2.0 users manual.
\newblock Technical Memorandum ANL/MCS-TM-322, Argonne National Laboratory,
  Argonne, Illinois, 2012.

\bibitem{Olischlaeger}
N.~Olischl\"ager and M.~Rumpf.
\newblock Two step time discretization of {W}illmore flow.
\newblock In {\em Mathematics of surfaces {XIII}}, volume 5654 of {\em Lect.
  Note in Comp. Sci.}, pages 278--292. Springer, Berlin, 2009.

\bibitem{Wardetzky}
M.~Wardetzky, M.~Bergou, D.~Harmon, D.~Zorin, and E.~Grinspun.
\newblock Discrete quadratic curvature energies.
\newblock {\em Comput. Aided Geom. Design}, 24(8-9):499--518, 2007.

\bibitem{willmoreBook}
T.~J. Willmore.
\newblock {\em Riemannian geometry}.
\newblock Oxford Science Publications. The Clarendon Press, Oxford University
  Press, New York, 1993.

\bibitem{Ziegler}
G.~M. Ziegler.
\newblock {\em Lectures on polytopes}, volume 152 of {\em Graduate Texts in
  Mathematics}.
\newblock Springer-Verlag, New York, 1995.

\end{thebibliography}
	\bibliographystyle{abbrv}
\end{document}